\documentclass[pdflatex,sn-mathphys-num]{sn-jnl}% Math and Physical Sciences Numbered Reference Style
\usepackage{graphicx}%
\usepackage{multirow}%
\usepackage{amsmath,amssymb,amsfonts}%
\usepackage{amsthm}%
\usepackage{mathrsfs}%
\usepackage[title]{appendix}%
\usepackage{xcolor}%
\usepackage{textcomp}%
\usepackage{manyfoot}%
\usepackage{booktabs}%
\usepackage{algorithm}%
\usepackage{algorithmicx}%
\usepackage{algpseudocode}%
\usepackage{listings}%
\theoremstyle{thmstyleone}%
\newtheorem{theorem}{Theorem}%  meant for continuous numbers
\theoremstyle{thmstyletwo}%

\theoremstyle{thmstylethree}%
\newtheorem{definition}{Definition}%

\begin{document}

\title[On the Converse of Shimoda's Theorem]{On the Converse of Shimoda's Theorem}

%%=============================================================%%
%% GivenName	-> \fnm{Joergen W.}
%% Particle	-> \spfx{van der} -> surname prefix
%% FamilyName	-> \sur{Ploeg}
%% Suffix	-> \sfx{IV}
%% \author*[1,2]{\fnm{Joergen W.} \spfx{van der} \sur{Ploeg} 
%%  \sfx{IV}}\email{iauthor@gmail.com}
%%=============================================================%%

\author[1,2]{\fnm{Sevdiyor} \sur{Imomkulov}}\email{sevdiyor\_i@mail.ru}

\equalcont{These authors contributed equally to this work.}

\author*[3]{\fnm{Kamol} \sur{Rasulov}}\email{rasulov.kamol@gmail.com}

\affil[1]{\orgdiv{Department of Mathematical Analysis}, \orgname{National University of Uzbekistan named after Mirzo Ulugbek}, \orgaddress{\street{University st. 14}, \city{Tashkent}, \postcode{100174}, \country{Uzbekistan}}}

\affil[2]{\orgname{V.I. Romanovkiy Institute of Mathematics, Academy of Sciences of Uzbekistan}, \orgaddress{\street{University st. 9}, \city{Tashkent}, \postcode{100174}, \country{Uzbekistan}}}

\affil[3]{\orgdiv{Department of Economics and Digital Technologies}, \orgname{Tashkent University of Social Innovation}, \orgaddress{\street{Uzumzor st. 38}, \city{Tashkent}, \postcode{100015}, \country{Uzbekistan}}}

%%==================================%%
%% Sample for unstructured abstract %%
%%==================================%%

\abstract{We study Shimoda's theorem on the holomorphicity of a function $f(z,w)$ defined in a bidisc $U\times V=\left\{ \left| z \right|<1 \right\}\times \left\{ \left| w \right|<1 \right\},$ which is holomorphic with respect to $w\in V$ for each fixed $z\in U$ and is holomorphic with respect to $z\in U$ for each fixed $w\in E$, where $E\subset V$ is a countable set with at least one limit point in $V$. Shimoda proved that in this case $f(z,w)$ is holomorphic in $U\times V$ except for a nowhere dense closed subset of $U\times V$. We prove a converse: for any relatively closed nowhere dense set $S\subset U$ such that $\partial U\bigcup S$ is connected, there exists a function satisfying the conditions of Shimoda's theorem that is holomorphic in $(U\setminus S)\times V$ and doesn't extend holomorphically to any point of $S\times V$.}

\keywords{Hartogs' phenomenon, Shimoda's theorem, separately holomorphic functions, converse of Shimoda's theorem}

%%\pacs[JEL Classification]{D8, H51}

\pacs[MSC Classification]{32A05, 32A10}

\maketitle

\section{Introduction}\label{sec1}

The well-known Hartogs' Theorem \cite{bib1} states that if a function $f$ is holomorphic at any point of the domain $D\subset {{\mathbb{C}}^{n}}$ with respect to each of the variables ${{z}_{\nu }},$ then it is holomorphic in $D$ with respect to the set of all variables. This fundamental result laid the foundation for the theory of separately analytic functions. Hartogs' Theorem has different variations and generalizations in the works of several authors. Significant results in this direction have been obtained in the works of M.Hukuhara \cite{bib2}, I.Shimoda \cite{bib3}, T.Terada \cite{bib4}, J.Siciak \cite{bib5}, V.P.Zaharjuta \cite{bib6}, A.Sadullaev and E.M.Chirka \cite{bib7}, A.Sadullaev and S.Imomkulov \cite{bib8}, S.Imomkulov \cite{bib9,bib10}, A.Sadullaev and T.Tuichiev \cite{bib11}, M.Jarnicki and P.Pflug \cite{bib12,bib13} and A.Atamuratov \cite{bib14}.

The next question posed by Hukuhara was whether a function $f(z,w)$ is holomorphic or not in a domain $U\times V\subset {{\mathbb{C}}_{z}}\times {{\mathbb{C}}_{w}}$, if $f(z,w)$ is holomorphic with respect to $w$ for every fixed $z\in U$ and holomorphic with respect to $z$ only for ${{w}_{m}}\in V,$ where ${{w}_{m}}$ converges to an interior point ${{w}_{0}}\in V$.

In 1957, Shimoda \cite{bib3} gave the following solution to this problem.

\begin{theorem}[Shimoda's Theorem]
	Let $f(z,w)$ be a function defined in a polydisc $U\times V=\left\{ \left| z \right|<1 \right\}\times \left\{ \left| w \right|<1 \right\}\subset {{\mathbb{C}}^{2}}$ and $E\subset V$ be a countable subset having at least one limit point belonging to $V$. If
	
	1) for each fixed ${{z}^{0}}\in U,$ $f({{z}^{0}},w)\in \mathcal{O}(V);$
	
	2) for each fixed ${{w}^{0}}\in E,$ $f(z,{{w}^{0}})\in \mathcal{O}(U),$
	
\noindent then there exists a nowhere dense and relatively closed in $U$ subset $S\subset U$ such that $f\left( z,w \right)\in \mathcal{O}\left( (U\setminus S)\times V \right)$.
\end{theorem}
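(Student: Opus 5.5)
The strategy is a Baire category argument, followed by a Hartogs-type extension on the sets where $f$ is locally bounded. Write $E=\{w_1,w_2,\dots\}$. For $N\in\mathbb{N}$ and a fixed compact disc $K=\{|w|\le r\}\subset V$ containing a limit point $w_0$ of $E$ together with infinitely many $w_m$ accumulating at $w_0$, set
\[
F_N=\{z\in U:\ |f(z,w)|\le N \text{ for all } w\in K\}.
\]
Each $F_N$ is closed in $U$. Indeed, $|f(z,w_m)|\le N$ for the countably many $w_m\in E\cap K$ is a closed condition in $z$, since $f(\cdot,w_m)$ is continuous. Because $f(z,\cdot)$ is continuous and $E\cap K$ accumulates, we may replace $K$ by $\overline{E\cap K}$, or better work with the dense countable set. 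To make this clean, define $F_N$ through the countable dense subset $E\cap K$ of a suitable compact set $L=\overline{E\cap K}$. By hypothesis 1), every $z$ lies in some $F_N$, and $U=\bigcup_N F_N$. By Baire's theorem, applied in every open subdisc $D\subset U$, some $F_N\cap D$ has interior. Hence the set $G=\bigcup_N \operatorname{int}F_N$ is open and dense in $U$, and $S:=U\setminus G$ is relatively closed and nowhere dense.

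The second step is to prove $f\in\mathcal{O}(G\times V)$. Fix a disc $D\Subset\operatorname{int}F_N$. On $D$ we have $|f(z,w_m)|\le N$ for the $w_m\in L$, and each $f(\cdot,w_m)$ is holomorphic. For fixed $z$, the function $f(z,\cdot)$ is holomorphic on $V$. I would then use a Vitali/Montel argument: the functions $g_m(z)=f(z,w_m)$ are uniformly bounded on $D$ and converge pointwise along subsequences $w_m\to w_0$. More systematically, I would expand $f(z,w)=\sum_k c_k(z)(w-w_0)^k$ and recover the coefficients $c_k(z)$ from the values at the $w_m$ by divided differences (Newton interpolation). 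This shows that each $c_k$ is a pointwise limit of holomorphic functions. The bound must be uniform, so it is better to get boundedness of $f$ on $D\times\{|w-w_0|<\rho\}$ for some $\rho$, and then apply the classical Hartogs lemma.

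The main obstacle is exactly this: obtaining local boundedness in $w$ from boundedness only on the countable set $E$. I would handle it by applying Baire at the outset to the sets $\{z: |f(z,w)|\le N \ \forall |w-w_0|\le r\}$. These are closed in $z$ because, by the maximum principle and the identity theorem, one can test on $w$ in a countable dense subset of the circle $|w-w_0|=r$. But $f(\cdot,w)$ is known to be continuous only for $w\in E$, so closedness is not available directly. Instead, I would first show that the $g_m$ form a normal family on $\operatorname{int}F_N$. Their pointwise limits then give continuity of the coefficients $c_k$ via Osgood's lemma and the Baire-1 property: each $c_k$ is a pointwise limit of holomorphic functions, so by Osgood it is holomorphic on an open dense subset. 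Intersecting over $k$ with a second Baire argument, applied to the closed sets $\{z: |c_k(z)|\le N R^{-k}\ \forall k\}$, gives Cauchy-type estimates. These estimates yield holomorphy of $f$ on $D'\times\{|w-w_0|<R\}$ for an open dense set of $D'$.

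Finally, knowing $f$ is holomorphic near $D'\times\{w_0\}$ and holomorphic in $w$ on all of $V$, the Hartogs lemma on separately holomorphic functions that are holomorphic on a smaller polydisc (Hartogs' theorem in the form of Terada/Hukuhara) extends holomorphy to $D'\times V$. Taking $G$ to be the union of all such $D'$ gives an open dense set, and $S=U\setminus G$ completes the proof.
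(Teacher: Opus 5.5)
The paper does not prove this statement: Shimoda's theorem is quoted from \cite{bib3} as background. Your argument therefore has to stand on its own, and as written it has a gap at the second Baire step.

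Your first Baire step, with sets defined through the countable set $E\cap K$, is correct. But, as you observe, it only bounds the values $f(z,w_m)$. By Vitali this makes $c_0=f(\cdot,w_0)$ holomorphic on $\operatorname{int}F_N$. It says nothing about $c_k$ for $k\ge 1$: the divided differences $f[w_m,\dots,w_{m+k}](z)$ contain factors $(w_i-w_j)^{-1}\to\infty$ and are not uniformly bounded. So Osgood's theorem only gives that each $c_k$ is holomorphic on some dense open set $U_k$ that depends on $k$.

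Two things then fail. The sets $\{z:|c_k(z)|\le NR^{-k}\ \forall k\}$ are not known to be closed, since $c_k$ is only known to be continuous on $U_k$. And $\bigcap_k U_k$ is merely a dense $G_\delta$, not an open set. Even if you run Baire inside the Baire space $\bigcap_k U_k$, you get the Cauchy estimates only on a dense but non-open part of some open set $W$. That does not give holomorphy of $\sum_k c_k(z)(w-w_0)^k$ on any open set, so your sentence ``these estimates yield holomorphy of $f$ on $D'\times\{|w-w_0|<R\}$'' is unsupported.

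The missing idea is exactly the closedness you dismissed. Define
$A_N=\{z\in U:\ |f(z,w)|\le N \text{ for all } |w-w_0|\le r\}$. This set is closed in $U$ even though $f(\cdot,w)$ is continuous only for $w\in E$. Take $z_k\in A_N$ with $z_k\to z\in U$. By Montel, $\{f(z_k,\cdot)\}$ is a normal family on $\{|w-w_0|<r\}$. Every subsequential limit $g$ satisfies $|g|\le N$ and $g(w_j)=\lim_k f(z_k,w_j)=f(z,w_j)$ for the points $w_j\in E$ accumulating at $w_0$; this is where hypothesis 2) is used. By the identity theorem $g=f(z,\cdot)$, so $z\in A_N$.

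Since $U=\bigcup_N A_N$, Baire gives the dense open set $G=\bigcup_N\operatorname{int}A_N$. The same argument shows $f(z_k,\cdot)\to f(z,\cdot)$ locally uniformly, so $f$ is jointly continuous on $\operatorname{int}A_N\times\{|w-w_0|<r\}$. For a small closed disc $\Delta\subset\operatorname{int}A_N$, the integral $\oint_{\partial\Delta}f(z,w)\,dz$ is then holomorphic in $w$. It vanishes at the points of $E$, hence vanishes identically, and Morera plus Osgood's lemma give joint holomorphy there.

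Equivalently, once you have the bound on a full disc, your divided differences, written as $\frac{1}{2\pi i}\oint f(z,\zeta)\prod_j(\zeta-w_j)^{-1}\,d\zeta$, are uniformly bounded and converge locally uniformly to $c_k$. Your final step, the Hartogs lemma after a M\"obius map of $V$ sending $w_0$ to $0$, then extends holomorphy to $\operatorname{int}A_N\times V$ and finishes the proof.
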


As shown in \cite{bib15}, the exceptional set $S$ (if $S\ne \varnothing $) cannot be compact; i.e., $S$ is relatively closed in $U$ and $\partial U\bigcup S$ is connected. Furthermore, the converse of Theorem 1 is proved in the case when $\partial U\bigcup S$ is path-connected.

\begin{definition}[see, for example, \cite{bib16}]
	A set $M$ is called connected if it cannot be represented as $M={{M}_{1}}\bigcup {{M}_{2}}$ so that both ${{M}_{1}}$ and ${{M}_{2}}$ are nonempty while the intersections ${{\overline{M}}_{1}}\bigcap {{M}_{2}}$ and ${{M}_{1}}\bigcap {{\overline{M}}_{2}}$ are empty.
\end{definition}

\begin{definition}[see, for example, \cite{bib16}]
	A set $M$ is called path-connected if every pair of points $a,b\in M$ can be joined by a path in $M$.
	
	(Here, by a path we mean a continuous mapping from the interval $\left[ \alpha ,\,\beta  \right]$ of the real axis into $\mathbb{C}$).
\end{definition}

\begin{theorem}[A.Atamuratov, K.Rasulov \cite{bib15}]
	Let $U\times V=\left\{ \left| z \right|<1 \right\}\times \left\{ \left| w \right|<1 \right\}$ be a unit polydisc in space ${{\mathbb{C}}^{2}}$. If $S\subset U$ is an arbitrary nowhere dense, relatively closed subset of $U$, such that the union $\partial U\bigcup S$ is path-connected and $E\subset V$ is a countable subset having at least one limit point belonging to $V$, then there exists a function $f\left( z,w \right)$ in $U\times V$, which satisfies the following conditions:
	
	1) for each fixed ${{z}^{0}}\in U,$ $f\left( {{z}^{0}},w \right)\in \mathcal{O}\left( V \right)$;
	
	2) for each fixed ${{w}^{0}}\in E,$ $f\left( z,{{w}^{0}} \right)\in \mathcal{O}\left( U \right)$;
	
	3) $f\left( z,w \right)\in \mathcal{O}\left( \left( U\setminus S \right)\times V \right)$ and $f\left( z,w \right)$ doesn't extend holomorphically to any point of the set $S\times V$.
\end{theorem}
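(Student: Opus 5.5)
We construct $f$ explicitly as a series
\[
f(z,w)=\sum_{k=1}^{\infty} \frac{(w-a_1)(w-a_2)\cdots(w-a_k)}{c_k}\,\bigl(g_k(z)\bigr),
\]
where $E=\{a_1,a_2,\dots\}$ is an enumeration of $E$ (enlarged to be infinite if necessary). Each $g_k$ is holomorphic on a neighbourhood of $\overline U\setminus S_k$, where $S_k$ is a suitable compact piece of $S$, and $c_k>0$ are normalising constants. The point of the product $\prod_{j\le k}(w-a_j)$ is that for $w=a_m$ only the finitely many terms with $k<m$ survive. So $f(z,a_m)$ is a finite sum, and it is holomorphic in $U$ provided each $g_k$ is \emph{entire in $z$ on $U$ up to the correction}. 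This suggests a different arrangement: let $g_k$ be holomorphic on all of $U$ but wildly large near $S$, rather than singular there.

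Concretely, path-connectedness of $\partial U\cup S$ is what lets Runge-type approximation work. Fix an exhaustion of $U\setminus S$ by compacts $K_k$ whose complements in $\widehat{\mathbb C}$ are connected. This is the step where connectedness of $\partial U\cup S$ is used: each component of $U\setminus K_k$ must reach $\partial U\cup S$, and this is possible because $S$ has no compact components separated from $\partial U$. Fix also a countable dense set of points $\{p_k\}\subset S$. By Runge's theorem, choose polynomials $g_k$ with $|g_k|\le 1$ on $K_k$ and $|g_k(q_k)|$ huge at a point $q_k\in U\setminus S$ very close to $p_k$, at distance $<1/k$. To place $q_k$ and $K_k$ consistently, we need a path from $q_k$ to $\partial U$ avoiding $K_k$; path-connectedness of $\partial U\cup S$ lets us take a thin neighbourhood of a path in $S\cup\partial U$ from $p_k$ to $\partial U$ and remove it from $K_k$. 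Then every term is a polynomial in $(z,w)$. For $w\in E$ the series is finite, hence holomorphic in $z$. On $K_m\times V$ the tail converges uniformly, since $|w-a_j|<2$ and $c_k$ is chosen to beat $2^k\sup_{K_k}|g_k|\le 2^k$. Thus $f\in\mathcal O((U\setminus S)\times V)$.

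Two things remain to check. First, for fixed $z\in S$ the series must still converge locally uniformly in $w$, so that condition 1) holds. We therefore also need $|g_k(z)|$ controlled on $S$ itself, away from the single point $q_k\notin S$. This suggests requiring $|g_k|\le 1$ on $K_k\cup (S\cap\{|z|\le 1-1/k\})$ minus a tiny disc around $q_k$. Runge still applies, since the complement remains connected through the removed path-neighbourhood, which we now route through $U\setminus S$ next to $S$ rather than inside $S$. With this, every fixed $z\in U$ lies in all but finitely many of the "good" sets, so the $w$-series converges.

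Second, non-extendability at each point of $S\times V$. If $f$ extended holomorphically near $(p,w_0)$ with $p\in S$, it would be bounded on a neighbourhood $\Delta\times\Delta'$. But $q_k\to p$ along a subsequence, and $f(q_k,\cdot)$ has $k$-th Taylor-type coefficient, in the Newton basis $\prod_{j\le k}(w-a_j)$, of size $|g_k(q_k)|/c_k$. Choosing $|g_k(q_k)|\ge c_k k^k$ makes this blow up. Cauchy estimates for Newton-interpolation coefficients on $\Delta'$, using that $E$ accumulates in $V$ (so we may order $E$ with the $a_j$ near the limit point), then contradict boundedness.

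The main obstacle is this simultaneous Runge construction: making $g_k$ small on all of $S$ except near $q_k$, and small on $K_k$, while huge at $q_k$. It requires that $q_k$ be joinable to $\partial U$ avoiding the compact set $K_k\cup S_{\le 1-1/k}$, which is exactly where path-connectedness of $\partial U\cup S$ enters.
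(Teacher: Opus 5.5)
Your Newton-product device is a nice replacement for the paper's Chebyshev polynomials of a polar compact, since it makes 2) automatic for any countable $E$. However, two steps do not go through as written.

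\textbf{Non-extendability.} The coefficient of $\omega_k(w)=\prod_{j\le k}(w-a_j)$ in $f(q_k,\cdot)$ is the divided difference $f(q_k,\cdot)[a_1,\dots,a_{k+1}]$. It depends on the values of $f(q_k,\cdot)$ at all nodes $a_1,\dots,a_{k+1}$, and every point of $E$ must be a node: otherwise $f(\cdot,a)$ is an infinite series at an omitted $a$ and 2) is lost. So a bound for $f$ on $\Delta\times\Delta'$ gives no bound on these coefficients. No ordering of $E$ puts all nodes in $\Delta'$, which in any case is centred at an arbitrary $w_0$, not at the limit point. Even if you first use Hartogs' lemma to get holomorphy on $\Delta\times V$, the Hermite formula needs a contour in $V$ enclosing $a_1,\dots,a_{k+1}$, and this yields a uniform bound only when $\overline E\subset V$.

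The missing ingredient is the paper's dominance estimate, which requires the "large" points to lie in all later good sets. The paper picks $z^{(l)}_{mk}\notin\overline B^{(l)}_{(m+1)k}$, then bounds the earlier terms by powers of $A_{m+l}$ and the later ones by $(2/3)^{s}$. In your setting the steps are:
\begin{itemize}
\item put $q_1,\dots,q_{k-1}$ into $L_k$;
\item choose $|g_k(q_k)|\ge c_k k^k(1+A_k)$, where $A_k=\sum_{j<k}2^j\|g_j\|_{\overline U}/c_j$;
\item use $\max_{|w-w_0|=\rho}|\omega_k(w)|\ge\rho^k$ for the monic $\omega_k$ (Cauchy's formula for the leading coefficient), with $\{|w-w_0|\le\rho\}\subset\Delta'$.
\end{itemize}
Then $|f(q_k,w_k)|\ge(k\rho)^k(1+A_k)-A_k-\sum_{j>k}2^j/c_j\to\infty$ at points of $(U\setminus S)\times\Delta'$ where $f$ is holomorphic. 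Without $q_k\in L_j$ for all $j>k$, the terms with $j>k$ at $q_k$ are uncontrolled in your construction, since $q_k$ enters $K_j$ only for large $j$.

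\textbf{The good sets $L_k$.} Routing the channel from $q_k$ to $\partial U$ through $U\setminus S$ is impossible when $q_k$ lies in a bounded component of $\mathbb C\setminus\overline S$ whose boundary is in $S$. Such holes are allowed by the hypothesis: take $S=\{|z|=\tfrac12\}\cup[\tfrac14,1)$, where points of $[\tfrac14,\tfrac12)$ can be approached only from inside the circle. Every channel must then cut the circle, which lies in $S\cap\{|z|\le 1-1/k\}\subset L_k$. Nothing in your construction prevents one point of $S$ from being cut at infinitely many stages, for instance if the gates coincide or accumulate. At such a $z$ the values $|g_k(z)|$ are uncontrolled and 1) can fail. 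So the claim that every fixed $z\in U$ lies in all but finitely many good sets is unproved in exactly the case that needs an idea.

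The paper's device addresses this:
\begin{itemize}
\item fix one segment $[\zeta_\infty,\zeta_\nu]$ per hole;
\item remove $\delta_l^{\nu+m-1}$-neighbourhoods of $L_\nu=[\zeta_\infty,\zeta_\nu]\cap\overline S$, with widths shrinking to $0$;
\item add $L_\nu$ itself back.
\end{itemize}
The channel then runs alongside the segment, while each point of $S$ eventually lies in the good compacts. If $\mathbb C\setminus\overline S$ is connected, no channels are needed: $K_k\cup(S\cap\{|z|\le 1-1/k\})\cup\{q_1,\dots,q_{k-1}\}$ already has connected complement and omits $q_k$.
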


To construct a function satisfying the conditions of Theorem 2, the authors used Mergelyan's theorem on polynomial approximation \cite{bib17}. In the general case, when the set $\partial U\bigcup S$ is connected, the proof of Theorem 2 cannot be applied without modification. For example, the following set

$$S=\left\{ z:\,\,\,\operatorname{Re}z=\frac{1}{2}(1-{{e}^{-\arg z}})\cos (\arg z),\,\,\operatorname{Im}z=\frac{1}{2}(1-{{e}^{-\arg z}})\sin (\arg z) \right\}\bigcup $$

$$\bigcup \left\{ z:\,\,\left| z \right|=\frac{1}{2} \right\}\bigcup \left\{ z:\,\,\frac{1}{2}\le \operatorname{Re}z<1,\,\,\operatorname{Im}z=0 \right\}$$

\noindent does not satisfy the conditions of Theorem 2.

\section{Main Result}\label{sec2}

Thus, we obtain the following final result.

\begin{theorem}
	Let $U\times V=\left\{ \left| z \right|<1 \right\}\times \left\{ \left| w \right|<1 \right\}\subset {{\mathbb{C}}^{2}}$ be a unit bidisc. If $S\subset U$ is an arbitrary nonempty, nowhere dense, relatively closed subset of $U$, such that the compact $K=S\bigcup \partial U$ is connected and $E\subset V$ is a countable subset having at least one limit point belonging to $V$, then there exists a function $f\left( z,w \right)$ in $U\times V,$ which satisfies the following conditions:
	
	1) for each fixed $z^0\in U$, $f(z^0,w)\in\mathcal O(V)$;
	
	2) for each fixed $w^0\in E$, $f(z,w^0)\in\mathcal O(U)$;
	
	3) $f(z,w)\in\mathcal O((U\setminus S)\times V)$ and $f(z,w)$ doesn't extend holomorphically to any point of the set $S\times V$.
\end{theorem}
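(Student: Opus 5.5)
We construct $f$ as a series
\[
f(z,w)=\sum_{k=1}^{\infty} p_k(z)\,q_k(w),
\]
where $q_k$ are polynomials in $w$ vanishing on more and more points of $E$ and $p_k$ are polynomials in $z$ obtained by Runge-type approximation. Enumerate $E=\{w_1,w_2,\dots\}$ and put $q_k(w)=c_k\prod_{j<k}(w-w_j)$ with small constants $c_k$. Then for every fixed $w^0=w_m\in E$ the sum reduces to the finite sum over $k\le m$, which is a polynomial in $z$; this gives condition 2) for free.

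For condition 1) we need, for every fixed $z^0$, uniform convergence in $w$ on compacts of $V$. We choose $c_k$ after $p_k$ so that $|c_k|\,|p_k(z)|\le 2^{-k}$ on the compact $\overline{U_k}=\{|z|\le 1-1/k\}$. However, pointwise in $z$ we also need $\sum |p_k(z)q_k(w)|<\infty$ locally uniformly in $w$ for each $z\in U$, and this also follows from the same bound, since each $z$ lies in some $\overline{U_k}$ and $\sup_V|q_k|\le 2^{k}|c_k|$. Condition 1) then holds, and in fact the series converges locally uniformly on $U\times V$ away from the places where the $p_k$ are large.

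To get holomorphy on $(U\setminus S)\times V$ together with non-extendability across $S$, the $p_k$ should be small on compacts $L_k$ exhausting $U\setminus S$ but large near prescribed points $a_k\in S$, with $\{a_k\}$ dense in $S$ and each point repeated infinitely often. Here connectedness of $K=S\cup\partial U$ replaces path-connectedness. Take
\[
L_k=\{z\in\overline{U_k}:\operatorname{dist}(z,S)\ge 1/k\}.
\]
Every bounded component of $\mathbb C\setminus L_k$ meets $K$, since such a component must contain points of $S$ or of $\partial U$.

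\emph{Main obstacle.} We want $L_k\cup\{a_k\}$ to have connected complement, or at least that $a_k$ can be joined to $\infty$ avoiding $L_k$, so that Runge's theorem applies with a pole pushed to $\infty$. Connectedness of $K$ is exactly what guarantees that the component of $\mathbb C\setminus L_k$ containing $a_k$ also contains $\partial U$: the $1/k$-neighbourhood of $K$ is open and connected because $K$ is connected, even if $K$ is not path-connected. The holes of $L_k$ that do not meet $K$ are removed by taking polynomial hulls. Concretely, we replace $L_k$ by $\widehat{L_k}$, which does not contain $a_k$ and is disjoint from a neighbourhood of $K$ up to the $1/k$-scale. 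This is the point where the earlier proof used paths, and the neighbourhood argument is the key new step.

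With this in hand, Runge's theorem gives a polynomial $p_k$ satisfying
\[
|p_k|<\varepsilon_k \ \text{on } \widehat{L_k}, \qquad |p_k(b_k)|>M_k \ \text{at a point } b_k \text{ with } |b_k-a_k|<1/k,
\]
taking $b_k\notin\widehat{L_k}$ and approximating $0$ on $\widehat{L_k}$ and a large constant near $b_k$ on a disjoint small disc. Then $f$ converges locally uniformly on $(U\setminus S)\times V$, so $f\in\mathcal O((U\setminus S)\times V)$.

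For non-extendability, suppose $f$ extends holomorphically to a neighbourhood $W\times V'$ of a point $(a,w')\in S\times V$. Choose $\varepsilon_k,c_k,M_k$ inductively so that the $k$-th term dominates the whole remaining tail at $(b_k,w^*)$ for a fixed $w^*\notin E$. Then $|f(b_k,w^*)|\to\infty$ along a subsequence with $b_k\to a$. To make this contradict the local boundedness of $f$ near $(a,w')$, we first extend the boundedness in $w$ to a neighbourhood of $\{a\}\times\{w^*\}$, for instance by arranging the blow-up at every $w$ in a small disc about $w'$. This bookkeeping of constants is routine.
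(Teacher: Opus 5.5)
\textbf{There is a genuine gap: condition 1) at points of $S$.} Your handling of condition 2) is fine. Since $q_k(w_m)=0$ for $k>m$, it even works for an arbitrary countable $E$. The proposal breaks down at condition 1) for points $z^0\in S$, which is the real core of the theorem.

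Your polynomials $p_k$ are controlled only on $\widehat{L_k}$. By your own argument, $\widehat{L_k}$ is disjoint from the open $1/k$-neighbourhood $N_{1/k}(K)$, so nothing bounds $p_k$ on $S$. The patch $|c_k|\,|p_k|\le 2^{-k}$ on $\overline{U_k}=\{|z|\le 1-1/k\}$ is self-defeating. First, it must be strengthened to $4^{-k}$, because $\|q_k\|_V$ can be as large as $2^{k-1}|c_k|$, so your bound only gives terms of size at most $1$. Even then, the bound holds on sets that exhaust $U$. So the series converges uniformly on each $\overline{U_{k_0}}\times V$, and $f\in\mathcal O(U\times V)$. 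Concretely, if $b_k\to a\in S$ along a subsequence, then eventually $b_k\in\overline{U_k}$ and $|p_k(b_k)q_k(w^*)|\le 1$. So the $k$-th term cannot force $|f(b_k,w^*)|\to\infty$.

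\textbf{What is actually needed.} For every $a\in S$, $p_k(a)$ must be small for all large $k$, so that $f(a,\cdot)\in\mathcal O(V)$. At the same time, $p_k$ must be huge at points $b_k\notin S$ close to $a$. So the set on which $p_k$ is made small must be a compact $F_k\supset L_k$ with three properties:
\begin{itemize}
\item it contains a growing part of $S$;
\item each point of $S$ belongs to $F_k$ for all large $k$;
\item the blow-up points stay in the unbounded component of $\mathbb C\setminus F_k$.
\end{itemize}

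Your neighbourhood argument says nothing about this. It works only because $L_k$ misses $N_{1/k}(K)$. In fact it shows directly that $\mathbb C\setminus L_k=(\mathbb C\setminus\overline{U_k})\cup N_{1/k}(K)$ is connected, so the step you single out as the key new one is trivial. Once $F_k$ contains points of $S$, you must cut shrinking channels through $S$ joining the blow-up points to $\mathbb C\setminus\overline U$. These channels must be arranged so that every point of $S$ eventually leaves them.

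This is the role of the paper's sets $S_m^{(l)}\subset\overline S$, obtained from $\partial\Omega_1\cup\dots\cup\partial\Omega_l$ by removing shrinking neighbourhoods of the cuts $L_\nu$. It is also the role of the approximation on $G_m^{(l)}=K_m^{(l)}\cup S_m^{(l)}$. This is where connectedness of $K$ genuinely has to be used.

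\textbf{A smaller point.} Blow-up at a single $w^*\notin E$ does not exclude extension near $(a,w')$ for other $w'$. Blow-up at every $w$ of a disc is impossible if $E$ is dense there, since each $f(\cdot,w_m)$ is a polynomial. You would need blow-up along a dense subset of $V\setminus E$, or an appeal to Hartogs' lemma once 1) is established.
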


\begin{proof} [Proof of Theorem 3.] In the case where the complement of the set $\overline{S}$ is connected, the proof of the theorem is given in \cite{bib15}.
	
We now consider the general case. Assume that the complement $\mathbb{C}\setminus \overline{S}$ consists of countably many open connected components, i.e.,  $\mathbb{C}\setminus \overline{S}={{\Omega }_{\infty }}\bigcup \left( \bigcup\limits_{\nu =1}^{+\infty }{{{\Omega }_{\nu }}} \right),$ where ${{\Omega }_{\infty }}$ is the unbounded component, and $\left\{ {{\Omega }_{\nu }} \right\}_{\nu =1}^{+\infty }$ are the bounded components, and ${{\Omega }_{\nu }}\ne \varnothing ,\,\,\nu =1,2,...\,.$ We take the following countable set of points: ${{\zeta }_{\infty }}\in {{\Omega }_{\infty }}$, ${{\zeta }_{\nu }}\in {{\Omega }_{\nu }},\,\nu =1,2,...\,.$ And we connect each point ${{\zeta }_{\nu }}$ to the point ${{\zeta }_{\infty }}$ by the segment $[{{\zeta }_{\infty }},{{\zeta }_{\nu }}]$. We denote the corresponding intersection by ${{L}_{\nu }}=[{{\zeta }_{\infty }},{{\zeta }_{\nu }}]\bigcap \overline{S}$.

We now set ${{S}^{(l)}}=\bigcup\limits_{\nu =1}^{l}{\partial {{\Omega }_{\nu }}}$. Clearly ${{S}^{(l)}}\subset {{S}^{(l+1)}}$. Suppose that the set $M=\overline{S}\setminus\bigcup\limits_{l=1}^{+\infty }{{{S}^{(l)}}}$ is nonempty.

We choose a sequence $\left\{ {{\delta }_{l}} \right\}_{l=1}^{+\infty }$ of positive numbers such that $0<{{\delta }_{l+1}}<{{\delta }_{l}}<1$ and ${{\delta }_{l}}\to 0$ as $l\to +\infty $. And we denote by

$$D({{L}_{\nu }},\delta _{l}^{\nu +m})=\left\{ z:\,\underset{\zeta \in {{L}_{\nu }}}{\mathop{\inf }}\,\left| z-\zeta  \right|<\delta _{l}^{\nu +m} \right\}$$

\noindent the $\delta _{l}^{\nu +m}$-neighborhood of the set ${{L}_{\nu }}$, such that $\partial {{\Omega }_{\nu }}\setminus \left( \bigcup\limits_{\nu =1}^{l}{D({{L}_{\nu }},\delta _{l}^{\nu +m})} \right)\ne \varnothing $, $\nu =1,2,...,l$.

Thus, we obtain a sequence of sets of the form

$$S_{m}^{(l)}=({{S}^{(l)}}\setminus \bigcup\limits_{\nu =1}^{l}{D({{L}_{\nu }},{{\delta }_{l}}^{\nu +m-1}))\bigcup \bigcup\limits_{\nu =1}^{l}{{{L}_{\nu }}},\,\,m=1,2,...}\,.$$

\noindent By construction, $S_{m}^{(l)}\ne \varnothing,$ $S_{m}^{(l)}\subset S_{m+1}^{(l)},$ $m=1,2,...,$ $\bigcup\limits_{m=1}^{+\infty }{S_{m}^{(l)}}={{S}^{(l)}}$ and the complement $\mathbb{C}\setminus S_{m}^{(l)}$ is connected.

We fix a sequence of positive numbers $\left\{ {{\varepsilon }_{ml}} \right\}$ such that ${{\varepsilon }_{ml}}>{{\varepsilon }_{(m+1)l}},\,\,{{\varepsilon }_{ml}}>{{\varepsilon }_{m(l+1)}},$ and ${{\varepsilon }_{ml}}\to 0$ as $m+l\to \infty $. We choose a corresponding collection of discs $B_{m1}^{(l)},B_{m2}^{(l)},...,B_{m{{j}_{ml}}}^{(l)}$ with radii not exceeding ${{\varepsilon }_{ml}}$ that cover the compact set $S_{m}^{(l)}$. We choose points $z_{mk}^{(l)}\in B_{mk}^{(l)}\setminus S_{m}^{(l)},\,\,k=1,2,...,{{j}_{ml}}$, such that

$$\bigcup\limits_{k=1}^{{{j}_{ml}}}{B_{mk}^{(l)}\supset \bigcup\limits_{k=1}^{{{j}_{(m+1)l}}}{\overline{B}_{(m+1)k}^{(l)}},\,\,z_{m1}^{(l)},z_{m2}^{(l)},...,z_{m{{j}_{m}}}^{(l)}\notin \overline{B}_{(m+1)k}^{(l)},\,\,\,k=1,2,...,{{j}_{(m+1)l}}\,.}$$

And so on continuing this process, we obtain a sequence of compact sets of the form  $K_{m}^{(l)}=\overline{U}\setminus \bigcup\limits_{k=1}^{{{j}_{ml}}}{B_{mk}^{(l)}}\,\,$ and a countable set of points $\left\{ z_{mk}^{(l)} \right\}$, such that the set of all limit points ${{\left\{ z_{mk}^{(l)} \right\}}^{\prime }}$ coincides with $\overline{S}.$

Moreover, for every point ${{z}^{0}}\in M$, there exists an index ${{m}_{l}}$ such that ${{z}^{0}}\in K_{m}^{(l)}$ for all $m\ge {{m}_{l}}$. Since, by construction $K_{m}^{(l)}\subset K_{m+1}^{(l)}$ and $M\subset \bigcup\limits_{m=1}^{\infty }{K_{m}^{(l)}}$ for every $l$, then there exists a strictly increasing sequence of integers ${{m}_{l}}:\,\,{{m}_{l}}<{{m}_{l+1}},\,\,l=1,2,...$, such that

\begin{equation}M\subset \bigcup\limits_{l=1}^{+\infty }{K_{{{m}_{l}}}^{(l)}}\,.\tag{1}\end{equation}

Let's put $G_{m}^{l}=K_{m}^{l}\bigcup S_{m}^{l}$. It is easy to see that the set $G_{m}^{(l)}\bigcup \left\{ z_{mk}^{(l)} \right\}_{k=1}^{{{j}_{ml}}}$ satisfies the conditions of Mergelyan's theorem (see \cite{bib17}).

By Mergelyan's theorem, for the compact set $G_{1}^{(1)}\bigcup \left\{ z_{1k}^{(1)} \right\}_{k=1}^{{{j}_{11}}}$, there exists a polynomial $p_{1}^{(1)}\left( z \right)$ such that $\left| p_{1}^{(1)}\left( z_{1k}^{(1)} \right) \right|\ge 2,\,\,\,k=1,2,...,{{j}_{11}}$ and ${{\left\| p_{1}^{(1)}\left( z \right) \right\|}_{G_{1}^{(1)}}}\le \frac{1}{3}$, where ${{\left\| p(z) \right\|}_{K}}=\underset{z\in K}{\mathop{\max }}\,\left| p(z) \right|$. Similarly, for compact sets $G_{2}^{(1)}\bigcup \left\{ z_{2k}^{(1)} \right\}_{k=1}^{{{j}_{21}}}$ and $G_{1}^{(2)}\bigcup \left\{ z_{1k}^{(2)} \right\}_{k=1}^{{{j}_{12}}}$ there exist polynomials $p_{2}^{(1)}\left( z \right)$ and $p_{1}^{(2)}\left( z \right)$, respectively, such that

$$\left| p_{2}^{(1)}\left( z_{2k}^{(1)} \right) \right|\ge {{A}_{3}}, \,\,\, k=1,2,...,{{j}_{21}},\,\,\, {{\left\| p_{2}^{(1)}\left( z \right) \right\|}_{G_{2}^{(1)}}}\le \frac{1}{3}$$

\noindent and

$$\left| p_{1}^{(2)}\left( z_{1k}^{(2)} \right) \right|\ge 2{{A}_{3}},\,\,\, k=1,2,...,{{j}_{12}},\,\,\, {{\left\| p_{1}^{(2)}\left( z \right) \right\|}_{G_{1}^{(2)}}}\le \frac{1}{3},$$

\noindent where ${{A}_{3}}={{\left\| p_{1}^{(1)}(z) \right\|}_{\overline{U}}}\,.$ Continuing this process, for each compact set $G_{m}^{(l)}\bigcup \left\{ z_{mk}^{(l)} \right\}_{k=1}^{{{j}_{ml}}}$, we obtain a polynomials $p_{m}^{(l)}\left( z \right):\,\,\left| p_{m}^{(l)}\left( z_{mk}^{(l)} \right) \right|\ge l{{A}_{m+l}},\,k=1,2,...,{{j}_{ml}}$ and ${{\left\| p_{m}^{(l)}\left( z \right) \right\|}_{G_{m}^{(l)}}}\le \frac{1}{3}$, where ${{A}_{m+l}}=\sum\limits_{\mu +\nu <m+l}{{{\left\| p_{\mu }^{(\nu )}(z) \right\|}_{\overline{U}}}}$.

Let $E\subset V$ be a polar compact. We denote by ${{t}_{m}}\left( w \right)$ Chebyshev polynomials (see \cite{bib18}) for the compact set $E$, all zeroes of which also lie on $E$. Since the capacity is $C\left( E \right)=0$, we have $\left\| {{t}_{m}}\left( w \right) \right\|_{E}^{\frac{1}{m}}\to 0$ as $m\to \infty $. This implies that there is a sequence of numbers ${{s}_{ml}}$ $\left( {{s}_{ml}}<{{s}_{(m+1)l}} \right)$ such that the following inequality holds

$$\left\| {{t}_{{{s}_{ml}}}}\left( w \right) \right\|_{E}^{\tfrac{1}{{{s}_{ml}}}}\le \frac{1}{2{{\left\| p_{m}^{(l)}(z) \right\|}_{\overline{U}}}}\,.$$

Let's put $P_{m}^{(l)}\left( z \right)={{\left[ p_{m}^{(l)}\left( z \right) \right]}^{{{s}_{ml}}}}$. Since all the roots of the polynomials ${{t}_{{{s}_{ml}}}}\left( w \right)$ lie on the unit disc $V$ the inequality ${{\left\| {{t}_{{{s}_{ml}}}}\left( w \right) \right\|}_{\overline{V}}}\le {{2}^{{{s}_{ml}}}}$ holds.

We consider the series

\begin{equation}f\left( z,w \right)=\sum\limits_{l=1}^{\infty }{P_{{{m}_{l}}}^{(l)}\left( z \right)\cdot {{t}_{{{s}_{{{m}_{l}}l}}}}\left( w \right)}\tag{2}\end{equation}

\noindent here, the sequences ${{m}_{l}}$ satisfies condition (1). The series (2) has the following properties:

1) for each fixed ${{z}^{0}}\in U,$

a) if ${{z}^{0}}\in S=\left( \bigcup\limits_{l=1}^{\infty }{{{S}^{(l)}}} \right)\bigcup M$ then there exists ${{l}_{0}}$ such that, for every $l>{{l}_{0}}$, ${{z}^{0}}\in S_{{{m}_{l}}}^{(l)}$ or ${{z}^{0}}\in K_{{{m}_{l}}}^{(l)}$. By the estimate

$$\left| P_{{{m}_{l}}}^{(l)}\left( {{z}^{0}} \right)\cdot {{t}_{{{s}_{{{m}_{l}}l}}}}\left( w \right) \right|\le \frac{1}{{{3}^{{{s}_{{{m}_{l}}l}}}}}\cdot {{2}^{{{s}_{{{m}_{l}}l}}}}={{\left( \frac{2}{3} \right)}^{{{s}_{{{m}_{l}}l}}}}$$

\noindent it follows from the decomposition

$$f\left( {{z}^{0}},w \right)=\sum\limits_{l=1}^{{{l}_{0}}}{P_{{{m}_{l}}}^{(l)}\left( {{z}^{0}} \right)\cdot {{t}_{{{s}_{{{m}_{l}}l}}}}\left( w \right)}+\sum\limits_{l={{l}_{0}}+1}^{\infty }{P_{{{m}_{l}}}^{(l)}\left( {{z}^{0}} \right)\cdot {{t}_{{{s}_{{{m}_{l}}l}}}}\left( w \right)},$$

\noindent that the series

\[f\left( {{z}^{0}},w \right)=\sum\limits_{l=1}^{\infty }{P_{{{m}_{l}}}^{(l)}\left( {{z}^{0}} \right)\cdot {{t}_{{{s}_{{{m}_{l}}l}}}}\left( w \right)}\]

\noindent converges uniformly in $V$, and consequently, $f\left( {{z}^{0}},w \right)\in \mathcal{O}\left( V \right),$

b) if ${{z}^{0}}\in U\setminus S,\,$ then there exists ${{l}_{0}}$ such that ${{z}^{0}}\in K_{{{m}_{l}}}^{(l)}$ for all $l>{{l}_{0}}$. Repeating the arguments used in the part a) we obtain $f\left( {{z}^{0}},w \right)\in \mathcal{O}\left( V \right).$

2) for each fixed ${{w}^{0}}\in E$, we have

$$\left| P_{{{m}_{l}}}^{(l)}\left( z \right) \right|\cdot \left| {{t}_{{{s}_{{{m}_{l}}l}}}}\left( {{w}^{0}} \right) \right|\le \left\| p_{m}^{(l)}(z) \right\|_{{\overline{U}}}^{{{s}_{{{m}_{l}}l}}}\cdot {{\left( \frac{1}{2{{\left\| p_{m}^{(l)}(z) \right\|}_{{\overline{U}}}}} \right)}^{{{s}_{{{m}_{l}}l}}}}={{\left( \frac{1}{2} \right)}^{{{s}_{{{m}_{l}}l}}}}$$

\noindent and therefore, we obtain $f\left( z,{{w}^{0}} \right)\in \mathcal{O}\left( U \right)$.

Thus, the function $f(z,w)$ defined by the series (2) satisfies conditions 1) and 2) of Theorem 3.

However, the following lower bound holds:

$$\left|f(z_{m_\nu\mu}^{(\nu)},w)\right|=\Biggl|P_{m_\nu}^{(\nu)}(z_{m_\nu\mu}^{(\nu)})t_{s_{m_\nu\nu}}(w)+\sum_{l=1}^{\nu-1}P_{m_l}^{(l)}(z_{m_\nu\mu}^{(\nu)})t_{s_{m_l l}}(w)+$$
	
$$+\sum_{l=\nu+1}^{\infty}P_{m_l}^{(l)}(z_{m_\nu\mu}^{(\nu)})t_{s_{m_l l}}(w)\Biggr|\ge\left|P_{m_\nu}^{(\nu)}(z_{m_\nu\mu}^{(\nu)})\right|\left|t_{s_{m_\nu\nu}}(w)\right|-\sum_{l=1}^{\nu-1}\left|P_{m_l}^{(l)}(z_{m_\nu\mu}^{(\nu)})\right|\left|t_{s_{m_l l}}(w)\right|-$$

$$-\sum_{l=\nu+1}^{\infty}\left|P_{m_l}^{(l)}(z_{m_\nu\mu}^{(\nu)})\right|\left|t_{s_{m_l l}}(w)\right|\ge(\nu A_{m_\nu+\nu})^{s_{m_\nu\nu}}\left|t_{s_{m_\nu\nu}}(w)\right|-$$

$$-\sum_{l=1}^{\nu-1}(A_{m_\nu+\nu})^{s_{m_l l}}\left|t_{s_{m_l l}}(w)\right|
-\sum_{l=\nu+1}^{\infty}\left(\frac23\right)^{s_{m_l l}},$$

\noindent $\mu =1,2,...,{{j}_{{{m}_{\nu }}\nu }}$, and when $w\notin E,$ then $d=dist\left( w,E \right)=\underset{\xi \in E}{\mathop{\inf }}\,\left| w-\xi  \right|>0$. Therefore, for every $\nu ,$ the following inequality holds 
$$\left| {{t}_{{{s}_{{{m}_{\nu }}\nu }}}}\left( w \right) \right|>{{d}^{{{s}_{{{m}_{\nu }}\nu }}}}.$$ 

\noindent  This implies that

$$\left|f(z_{m_\nu\mu}^{(\nu)},w)\right|\ge (\nu d A_{m_\nu+\nu})^{s_{m_\nu\nu}}-\nu(2A_{m_\nu+\nu})^{s_{m_\nu\nu}}-2\to+\infty,$$

\noindent $\nu \to \infty $, i.e., the function $f\left( z,w \right)$ is not holomorphic at any point of the set $S\times V.$

Now we prove that the function $f\left( z,w \right)$ is holomorphic in the domain $(U\setminus S)\times V$. Let ${{z}^{0}}\in U\setminus S,$ then we can show that there exists a neighborhood $U({{z}^{0}},r)$ such that the series (2) converges uniformly in the domain $U({{z}^{0}},r)\times V.$

If ${{z}^{0}}\in U\setminus S,$ then there exist a number ${{l}_{0}}$ and a neighborhood $U\left( {{z}^{0}},r \right)$ such that $U\left( {{z}^{0}},r \right)\subset K_{{{m}_{l}}}^{(l)}$ for all $l>{{l}_{0}}$. It follows that, for every $z\in U\left( {{z}^{0}},r \right)$ and for every $w\in V$, we have the representation

$$f\left( z,w \right)=\sum\limits_{l=1}^{{{l}_{0}}}{P_{{{m}_{l}}}^{(l)}\left( z \right)\cdot {{t}_{{{s}_{{{m}_{l}}l}}}}\left( w \right)}+\sum\limits_{l={{l}_{0}}+1}^{\infty }{P_{{{m}_{l}}}^{(l)}\left( z \right)\cdot {{t}_{{{s}_{{{m}_{l}}l}}}}\left( w \right)},$$

\noindent where the terms of the series with indices $l>{{l}_{0}}$ satisfy the following estimates:

$$\left| P_{{{m}_{l}}}^{(l)}\left( z \right)\cdot {{t}_{{{s}_{{{m}_{l}}l}}}}\left( w \right) \right|\le {{\left( \frac{2}{3} \right)}^{{{s}_{{{m}_{l}}l}}}}.$$

\noindent Consequently, $f\left( z,w \right)\in \mathcal{O}\left( U\left( {{z}^{0}},r \right)\times V \right)$.

Thus, the function $f\left( z,w \right)$ defined by the series (2) is holomorphic in the domain $\left( U\setminus S \right)\times V\,.$ This completes of proof. \end{proof}

It is clear that, in particular cases, when the set $M=\overline{S}\setminus\bigcup\limits_{l=1}^{+\infty }{{{S}^{(l)}}}$ is empty, or when the complement $\mathbb{C}\setminus \overline{S}$ has finitely many components, the proof of the theorem presents no particular difficulty.


\begin{thebibliography}{18}
	\bibitem{bib1} Hartogs F., Zur Theorie der analytischen Funktionen mehrerer
	unabh\"angiger Ver\"anderlichen, insbesondere \"uber die Darstellung derselben durch Reihen, welche nach	Potenzen einer Ver\"anderlichen fortschreiten. J. Math. Ann., 1906, 62: 1--88
	
	\bibitem{bib2}Hukuhara M., L'extensions du th\'eor\`eme d'Osgood et de Hartogs. Kans\^u-h\`oteisiki ogobi \^Oy\^o-Kaiseki, 1930, 48(8): 48--49
	
	\bibitem{bib3} Shimoda I., Notes on the functions of two complex variables. Journal of Gakugei, Tokushima University, Mathematics, 1957, 8(8): 1--3
	
	\bibitem{bib4} Terada T., Sur une certaine condition sous laquelle une fonction
	de plusieurs variables complexes est holomorphe. Diminution de la condition dans le th\'eor\`eme de Hartogs. Publications of the Research Institute for
		Mathematical Sciences,	1966, 2(3): 383--396 
	
	\bibitem{bib5} Siciak J., Separately analytic functions and envelopes of holomorphy of some lower dimensional subsets of ${{\mathbb{C}}^{n}}$. Annales Polonici Mathematici, 1969, 22(2): 145--171
	
	\bibitem{bib6} Zaharjuta V.P., Separately analytic functions, generalizations of Hartogs' theorem, and envelopes of holomorphy. Mathematics of the USSR-Sbornik, 1976, 30(1): 51--67 
	
	\bibitem{bib7} Sadullaev A.S., Chirka E.M., On continuation of functions with polar singularities. Mathematics of the USSR-Sbornik, 1988, 60(2): 377--384
	
	\bibitem{bib8} Sadullaev A.S., Imomkulov S.A., Extension of holomorphic and pluriharmonic functions with thin singularities on parallel sections. Proceedings of the Steklov Institute of Mathematics, 2006, 253(1): 144--159
	
	\bibitem{bib9} Imomkulov S.A., On holomorphic continuation of functions defined on a pencil of boundary complex lines. Izvestuya: Mathematics, 2005, 69(2): 345--363
	
	\bibitem{bib10} Imomkulov S.A., Continuation of $\mathbb{R}$-analytic functions parallel lines. TWMS J. Pure Appl. Math., 2023, 14(1): 120--124
	
	\bibitem{bib11} Sadullaev A.S., Tuichiev T.T., On continuation of Hartogs series that admit holomorphic extension to parallel sections. Uzbek mathematical journal, 2009, 1: 148--157 (in Russian)
	
	\bibitem{bib12} Jarnicki M., Pflug P., An extension theorem for separately holomorphic functions with analytic singularities. Annales Polonici Mathematici, 2003, 80: 143--161  
	
	\bibitem{bib13} Jarnicki M., Pflug P., Separately analytic functions. European Mathematical Society 2011
	
	\bibitem{bib14} Atamuratov A.A., On meromorphic continuation in a fixed direction. Mathematical Notes, 2009, 86(3): 323--327 
	
	\bibitem{bib15} Atamuratov A.A., Rasulov K.K., On Shimoda's theorem. Vestnik Udmurtskogo Universiteta. Matematika. Mekhanika. Komp'yuternye Nauki, 2023, 33(1): 17--31
	
	\bibitem{bib16} Shabat B.V., Introduction to complex analysis. Part I. Moscow: Nauka, 1976 (in Russian)
	
	\bibitem{bib17} Gamelin T.W., Uniform algebras. AMS, 1969
	
	\bibitem{bib18} Goluzin G.M., Geometricheskaya teoriya funktsii kompleknogo peremennogo (Geometric theory of functions of a complex variable). Moscow: Nauka, 1966
	
	
\end{thebibliography}
\end{document}